\documentclass{amsart}

\usepackage[colorlinks=true,linkcolor=magenta,citecolor=blue]{hyperref}
 
\usepackage{amsfonts}
\usepackage{amssymb}
\usepackage{amsmath}
\usepackage{amsthm}

\newtheorem{theorem}{Theorem}[section]
\newtheorem{proposition}[theorem]{Proposition}
\newtheorem{lemma}[theorem]{Lemma}
\newtheorem{corollary}[theorem]{Corollary}

\theoremstyle{definition}

\theoremstyle{remark}
\newtheorem{rmk}[theorem]{Remark}

\newenvironment{lem}{\begin{lemma}}{\end{lemma}}
\newenvironment{prop}{\begin{proposition}}{\end{proposition}}

\newcommand{\sfX}{\mathsf{X}}

\usepackage{tikz-cd}

\usepackage{etoolbox}
\AtBeginEnvironment{bmatrix}{\setlength{\arraycolsep}{2pt}}

\newcommand{\pl}{{\!+\!}}
\newcommand{\mn}{{\!-\!}}

\begin{document}

\title{The Ext-algebra of the Brauer tree algebra associated to a line}

\author{Olivier Dudas}
\address{Universit\'e de Paris and Sorbonne Universit\'e, CNRS, IMJ-PRG, F-75006 Paris, France.}
\email{olivier.dudas@imj-prg.fr}

\thanks{The author gratefully acknowledges financial support by the ANR, Project
No ANR-16-CE40-0010-01.}

\maketitle

\begin{abstract}
We compute the $\mathsf{Ext}$-algebra of the Brauer tree algebra associated to a line with no exceptional vertex. 
\end{abstract}

\section*{Introduction}
This note provides a detailed computation of the $\mathsf{Ext}$-algebra for a very specific finite dimensional algebra, namely a
Brauer tree algebra associated to a line, with no exceptional vertex. Such algebras appear for example as the principal $p$-block of the symmetric group $\mathfrak{S}_p$, and in a different context, as blocks of the Verlinde categories $\mathsf{Ver}_{p^2}$ studied by Benson--Etingof in \cite{BE} (our computation is actually motivated by \cite[Conj. 1.3]{BE}).

\smallskip

Let us emphasise that $\mathsf{Ext}$-algebras for more general biserial algebras were explicitly computed by Green--Schroll--Snashall--Taillefer in \cite{GSST}, but under some assumption on the multiplicity of the the vertices, assumption which is not satisfied  for the simple example treated in this note. Other general results relying on Auslander--Reiten theory were obtained by Antipov--Generalov \cite{AG} and Brown \cite{Br}. However we did not manage to use their work to get an explicit description in our case. Nevertheless, the simple structure of the projective indecomposable modules for the line allows a straightforward approach using explicit projective resolutions of simple modules. The Poincar\'e series for the $\mathsf{Ext}$-algebra is given in Proposition~\ref{prop:poincare} and its structure as a path algebra with relations is given in Proposition~\ref{prop:main}.

\section*{Acknowledgments}
We thank Rapha\"el Rouquier and Rachel Taillefer for providing helpful references.

\section{Notation}
Let $\mathbb{F}$ be a field, and $A$ be the self-injective finite dimensional $\mathbb{F}$-algebra. All $A$-modules will be assumed to be finitely generated.
Given an $A$-module $M$, we denote by $\Omega(M)$ the kernel of a projective cover $P\twoheadrightarrow M$. Up to isomorphism it does not depend on the cover. We then define inductively $\Omega^n(M) = \Omega(\Omega^{n-1}(M))$ for $n \geq 1$.

\smallskip
To compute the extension groups between simple modules we will use the property that if $\Omega^n(M)$ is indecomposable and non-projective then
$$\mathsf{Ext}_A^n(M,S) \simeq \mathsf{Hom}_{A}(\Omega^n(M),S)$$ 
for all simple $A$-module $S$ and all $n \geq 1$.

\smallskip

For computing the algebra structure on the various $\mathsf{Ext}$-groups it will be convenient to work in the homotopy category $\mathsf{Ho}(A)$ of the complexes of finitely generated $A$-modules. If $S$ (resp. $S'$) is a simple $A$-module, and ${P}_\bullet \rightarrow S$ (resp. $P_\bullet' \rightarrow S'$) is a projective resolution then 
$$ \mathsf{Ext}_A^n(S,S') \simeq \mathsf{Hom}_{\mathsf{Ho(A)}}(P_\bullet,P_\bullet'[n])$$
with the Yoneda product being given by the composition of maps in $\mathsf{Ho}(A)$.

\smallskip
Assume now that $A$ is the $\mathbb{F}$-algebra associated to the following Brauer tree with $N+1$ vertices:
$$\begin{tikzpicture}
    \node[shape=circle,draw=black] (A) at (0,0) {};
    \node[shape=circle,draw=black] (B) at (2,0) {};
    \node[shape=circle,draw=black] (C) at (4,0) {};
    \node[shape=circle,draw=black] (D) at (8,0) {};
    \node[shape=circle,draw=black] (E) at (10,0) {};

    \path (A) edge node[above] {$S_1$} (B);
    \path (B) edge node[above] {$S_2$} (C);
    \path[dashed] (C) edge node[above] {} (D);
    \path (D) edge node[above] {$S_N$} (E);
%    \path [->](D) edge node[left] {$3$} (E);
%    \path [->](D) edge node[top] {$3$} (F);
%    \path [->](C) edge node[top] {$5$} (F);
%    \path [->](E) edge node[right] {$8$} (F);   
\end{tikzpicture}$$
Here, unlike in \cite{GSST} we assume that there are no exceptional vertex. The edges are labelled by the simple $A$-modules $S_1, \ldots,S_{N}$. %For each $i = 1,\ldots,N$ we fix a projective cover $\pi_i : P_i \twoheadrightarrow S_i$ of the simple module $S_i$. 
The head and socle of $P_i$ are isomorphic to $S_i$ and $\mathsf{rad}(P_i)/S_i \simeq S_{i-1}\oplus S_{i+1}$ with the convention that $S_0 = S_{N+1} = 0$. 

\section{Ext-groups}

Given $1 \leq i \leq j \leq N$ with $i-j $ even, there is, up to isomorphism, a unique non-projective indecomposable module ${}^i \sfX^j$ such that 
\begin{itemize}
  \item $\mathsf{rad}({}^i \sfX^j) = S_{i+1} \oplus S_{i+3} \oplus \cdots \oplus S_{j-1}$
  \item $\mathsf{hd}({}^i \sfX^j) = S_{i} \oplus S_{i+2} \oplus \cdots \oplus S_{j}$.
\end{itemize}
The structure of ${}^i \sfX^j$ can be represented by the following diagram:
$$\begin{tikzcd}
&[-20pt] S_i \ar[rdd,dash]&[-30pt] &[-30pt] S_{i+2} \ar[rdd,dash]&[-30pt] &[-30pt] S_{i+4}&[-20pt] \cdots &[-20pt] S_{j-2} \ar[rdd,dash] &[-30pt] &[-30pt] S_j  \\[-15pt] 
{}^i \sfX^j = & & & & & &  \cdots & &  \\[-15pt]
& & S_{i+1} \ar[ruu,dash] & & S_{i+3} \ar[ruu,dash] & & \cdots & & S_{j-1} \ar[ruu,dash]
\end{tikzcd}$$
Similarly we denote by ${}_i \sfX_j$ the unique indecomposable module with the following structure:
$$\begin{tikzcd}
&[-20pt] &[-30pt] S_{i+1} \ar[rdd,dash] &[-30pt] &[-30pt] S_{i+3} \ar[rdd,dash] &[-30pt] &[-20pt] \cdots &[-20pt] &[-30pt] S_{j-1} \ar[rdd,dash] &[-30pt] \\[-15pt] 
{}_i \sfX_j = & & & & & &  \cdots & &  \\[-15pt]
&S_i \ar[ruu,dash]&  & S_{i+2} \ar[ruu,dash]&  &  S_{i+4}&  \cdots &  S_{j-2} \ar[ruu,dash] &  & S_j 
\end{tikzcd}$$
Finally, in the case where $i-j $ is odd we define the modules ${}_i \sfX^j$ and ${}^i \sfX_j$ as the indecomposable modules with the following respective structure:
$$\begin{tikzcd}
&[-20pt] &[-30pt] S_{i+1} \ar[rdd,dash] &[-30pt] &[-30pt] S_{i+3} \ar[rdd,dash] &[-30pt] &[-20pt] \cdots &[-20pt] &[-30pt] S_{j}   \\[-15pt] 
{}_i \sfX^j = & & & & & &  \cdots & &  \\[-15pt]
&S_i \ar[ruu,dash]&  & S_{i+2} \ar[ruu,dash]&  &  S_{i+4}&  \cdots &  S_{j-1} \ar[ruu,dash] &  
\end{tikzcd}$$
$$\begin{tikzcd}
&[-20pt] S_i \ar[rdd,dash]&[-30pt] &[-30pt] S_{i+2} \ar[rdd,dash]&[-30pt] &[-30pt] S_{i+4}&[-20pt] \cdots &[-20pt] S_{j-1} \ar[rdd,dash] &[-30pt]   \\[-15pt] 
{}^i \sfX_j = & & & & & &  \cdots & &  \\[-15pt]
& & S_{i+1} \ar[ruu,dash] & & S_{i+3} \ar[ruu,dash] & & \cdots & & S_{j} 
\end{tikzcd}$$
For convenience we will extend the notation ${}^i \sfX^j$, ${}_i \sfX_j$, ${}_i \sfX^j$ and ${}^i \sfX_j$ to any integers $i,j \in \mathbb{Z}$ (with the suitable parity condition on $i-j$) so that the following relations hold:
\begin{equation} \label{eq:xij}
{}^i \sfX =  {}_{1-i} \sfX, \qquad {}^i \sfX^j = {}_{j} \sfX_i, \qquad
 {}^{i \pm 2N} \sfX = {}^i \sfX.
\end{equation}
Note that this also implies $\sfX^{j} = \sfX_{1-j}$ and $\sfX^{j\pm 2N} = \sfX^j$.

\begin{lem}\label{lem:omegaxij}
Let $i,j \in \mathbb{Z}$ with $i-j$ even. Then 
$$\Omega ( {}^i \sfX^j) \simeq  {}^{i-1} \sfX^{j+1}.$$
\end{lem}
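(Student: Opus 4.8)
The plan is to read off $\Omega({}^i\sfX^j)=\ker\pi$ from an explicit projective cover $\pi\colon P\twoheadrightarrow {}^i\sfX^j$. Since $\mathsf{hd}({}^i\sfX^j)=S_i\oplus S_{i+2}\oplus\cdots\oplus S_j$, a projective cover is
$$P \;=\; \bigoplus_{k=0}^{(j-i)/2} P_{i+2k},$$
so it suffices to identify $\ker\pi\subseteq\mathsf{rad}(P)$. For each $m$ occurring I would fix a basis of $P_m$ compatible with its Loewy series: a generator $e_m$, elements $p_m$ and $q_m$ spanning the summands $S_{m-1}$ and $S_{m+1}$ of $\mathsf{rad}(P_m)/\mathsf{soc}(P_m)$, and $s_m$ spanning $\mathsf{soc}(P_m)\simeq S_m$, with the convention $p_m=0$ if $m=1$ and $q_m=0$ if $m=N$ (because $S_0=S_{N+1}=0$). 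Up to rescaling, the only nonzero actions of the arrows of $A$ on these vectors are $p_m\mapsto s_m$, via the arrow $S_{m-1}\to S_m$ inside $P_m$, and $q_m\mapsto s_m$, via the arrow $S_{m+1}\to S_m$ inside $P_m$.

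I would then trace $\pi$ through these bases. As $\pi|_{P_m}$ sends $e_m$ onto a generator of the top copy of $S_m$ in ${}^i\sfX^j$, it sends $p_m$ and $q_m$ to the radical copies of $S_{m-1}$ and $S_{m+1}$ whenever those occur in $\mathsf{rad}({}^i\sfX^j)$, and to $0$ otherwise — the latter only at the two ends, namely $\pi(p_i)=0$ and $\pi(q_j)=0$ — while $\pi(s_m)=0$ because $s_m\in\mathsf{rad}^2(P)$ and $\mathsf{rad}^2({}^i\sfX^j)=0$ (for $N=1$ the lemma is trivial). For $0\le l<(j-i)/2$ the simple $S_{i+2l+1}$ occurs with multiplicity one in $\mathsf{rad}({}^i\sfX^j)$ but is the common image, up to a nonzero scalar, of $q_{i+2l}$ coming from $P_{i+2l}$ and of $p_{i+2l+2}$ coming from $P_{i+2l+2}$. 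It follows that $\ker\pi$ is spanned by the vectors $s_i,s_{i+2},\ldots,s_j$, by $p_i$, by $q_j$, and by the mixed vectors $q_{i+2l}-\lambda_l\,p_{i+2l+2}$ for $0\le l<(j-i)/2$ and suitable nonzero scalars $\lambda_l$ (with $p_i$, resp. $q_j$, omitted when $i=1$, resp. $j=N$); that these exhaust $\ker\pi$ is confirmed by a dimension count, the composition factors of $\ker\pi$ being $S_{i-1},S_i,\ldots,S_{j+1}$, each with multiplicity one.

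It then remains to recognise this submodule. The $s_m$ span its (semisimple) radical $S_i\oplus S_{i+2}\oplus\cdots\oplus S_j$, and $\mathsf{hd}(\ker\pi)=S_{i-1}\oplus S_{i+1}\oplus\cdots\oplus S_{j+1}$ is spanned by the classes of the remaining generators; moreover, by the arrow actions recorded above, $p_i\mapsto s_i$, $q_j\mapsto s_j$, and each $q_{i+2l}-\lambda_l\,p_{i+2l+2}$ maps onto both $s_{i+2l}$ and $s_{i+2l+2}$. Hence $\ker\pi$ is the connected zig-zag module with successive layers $S_{i-1},S_i,S_{i+1},S_{i+2},\ldots,S_j,S_{j+1}$ — in particular indecomposable and non-projective — which is exactly the diagram defining ${}^{i-1}\sfX^{j+1}$; by the uniqueness in that definition, $\Omega({}^i\sfX^j)=\ker\pi\simeq {}^{i-1}\sfX^{j+1}$. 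The same computation disposes of the boundary cases $i=1$ and $j=N$: the zig-zag simply loses its first, respectively last, top vertex, which is precisely what the conventions $S_0=S_{N+1}=0$, ${}^0\sfX={}_1\sfX$ and $\sfX^{N+1}=\sfX_N$ encode; and an arbitrary pair $i,j\in\mathbb{Z}$ is brought into the range $1\le i\le j\le N$ using the periodicity ${}^{i\pm 2N}\sfX={}^i\sfX$ (and, where needed, the reflection ${}^i\sfX={}_{1-i}\sfX$, after which the identical argument applies to the relevant mixed-type module).

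The step I expect to be the main obstacle is the explicit determination of $\ker\pi$ in the second paragraph: one must choose the bases of the various $P_m$ coherently enough to compare the two preimages $q_{i+2l}$ and $p_{i+2l+2}$ of each interior simple $S_{i+2l+1}$, assemble the mixed generators, and then confirm that the socle vectors, the two end generators, and these mixed vectors span an $A$-submodule of the correct dimension. Everything afterwards — reading the arrow action back off to recognise this submodule as the zig-zag ${}^{i-1}\sfX^{j+1}$, and checking that the ends degenerate in accordance with \eqref{eq:xij} — is bookkeeping with the Loewy structure of $P$ and the vanishing conventions at the ends of the line.
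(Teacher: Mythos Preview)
Your proposal is correct and follows essentially the same approach as the paper: identify the projective cover $P_i\oplus P_{i+2}\oplus\cdots\oplus P_j\twoheadrightarrow {}^i\sfX^j$, read off its kernel as the zig-zag ${}^{i-1}\sfX^{j+1}$, check the boundary cases $i=1$ and $j=N$ against the conventions \eqref{eq:xij}, and reduce the general $i,j\in\mathbb{Z}$ to the fundamental domain via those same relations (with the mixed-type cases handled analogously). The only difference is granularity: the paper asserts the kernel identification in one line, whereas you carry it out explicitly with bases of the $P_m$ and a dimension count.
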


\begin{proof}
Using the relations \eqref{eq:xij} it is enough to prove that for $1 \leq k \leq l \leq N$ we have the following isomorphisms
$$ \Omega ( {}^k \sfX^l)  \simeq  {}^{k-1} \sfX^{l+1}, \quad 
\Omega ( {}_k \sfX^l) \simeq  {}_{k+1} \sfX^{l+1}, \quad 
 \Omega ( {}^k \sfX_l) \simeq  {}^{k-1} \sfX_{l-1}, \quad
 \Omega ( {}_k \sfX_l) \simeq  {}_{k+1} \sfX_{l-1}.$$
We only consider the first one, the others are similar. If $1 \leq k \leq l \leq N$ a projective cover of  ${}^k \sfX^l$ is given by $P_k \oplus P_{k+2} \oplus \cdots \oplus P_l \twoheadrightarrow {}^k \sfX^l$, whose kernel equals ${}^{k-1} \sfX^{l+1}$. Note that this holds even when $k=1$ since ${}^0\sfX^{l+1} = {}_1 \sfX^{l+1}$ or when $l = N$ since ${}^{k-1} \sfX^{N+1} = {}^{k-1} \sfX^{-N+1} = {}^{k-1} \sfX_{N} $.
\end{proof}

We deduce from Lemma~\ref{lem:omegaxij} that for any simple module $S_i$ and for all $k \geq 0$ we have
$$ \Omega^k (S_i) = \Omega^k ({}^i\sfX^i) \simeq {}^{i-k} \sfX^{i+k}$$ 
as $A$-modules. Consequently we have 
$$\mathsf{Ext}_A^k(S_i,S_j) = \left\{ \begin{array}{ll} \mathbb{F} & \text{if $S_j$ appears in the head of ${}^{i-k} \sfX^{i+k}$}, \\ 0 & \text{otherwise.} \end{array}\right.$$
From this description one can compute explicitly the Poincar\'e series of the $\mathsf{Ext}$-groups.

\begin{prop}\label{prop:poincare}
Given $1 \leq i,j\leq N$, the Poincar\'e series of $\mathsf{Ext}_A^\bullet(S_i,S_j)$ is given by
$$ \sum_{k \geq 0} \mathsf{dim}_\mathbb{F} \, \mathsf{Ext}_A^k(S_i,S_j) t^k= \frac{Q_{i,j}(t) + t^{2N-1} Q_{i,j}(t^{-1})}{1-t^{2N}}$$
where $Q_{i,j}(t) =  t^{|j-i|} + t^{|j-i|+2} + \cdots + t^{N-1-|N+1-j-i|}$.
\end{prop}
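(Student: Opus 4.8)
The plan is to turn the statement into a count of the degrees $k$ in which $S_j$ occurs in $\mathsf{hd}(\Omega^k S_i)$, use $\Omega$-periodicity to cut down to $0\le k\le 2N-1$, run a short case analysis on the head of ${}^{i-k}\sfX^{i+k}$ in that range, and then match the result with the claimed numerator.

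By the two identities recalled just before the statement, $\dim_\mathbb{F}\mathsf{Ext}^k_A(S_i,S_j)\in\{0,1\}$, being $1$ exactly when $S_j$ is a direct summand of $\mathsf{hd}(\Omega^k S_i)=\mathsf{hd}({}^{i-k}\sfX^{i+k})$; write $\chi_k\in\{0,1\}$ for this indicator, so the Poincar\'e series equals $\sum_{k\ge0}\chi_k t^k$. A short computation with \eqref{eq:xij} gives $\Omega^N(S_i)={}^{i-N}\sfX^{i+N}\simeq S_{N+1-i}$ (reflecting both ends: ${}^{i-N}\sfX=\,{}_{N+1-i}\sfX$ and $\sfX^{i+N}=\sfX_{N+1-i}$, so the reduced module is ${}_{N+1-i}\sfX_{N+1-i}=S_{N+1-i}$), hence $\Omega^{2N}(S_i)\simeq S_i$. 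Consequently $\chi_{k+2N}=\chi_k$ for all $k\ge0$, so $\sum_{k\ge0}\chi_k t^k=(1-t^{2N})^{-1}\sum_{k=0}^{2N-1}\chi_k t^k$; and since $\Omega^{N+m}(S_i)\simeq\Omega^m(S_{N+1-i})$, we get $\chi_{N+m}=[\,S_j\subseteq\mathsf{hd}(\Omega^m S_{N+1-i})\,]$ for $0\le m\le N-1$. It is therefore enough to describe $\mathsf{hd}(\Omega^k S_i)$ for $0\le k\le N-1$ and then to assemble the two halves of the sum.

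For $0\le k\le N-1$ the left end $i-k$ lies in $[2-N,N]$ and the right end $i+k$ in $[2,2N-1]$, so each is at most one reflection away from $\{1,\dots,N\}$. I would split into four cases, according to whether $i-k\le0$ and whether $i+k>N$; in each case one reduces ${}^{i-k}\sfX^{i+k}$, using \eqref{eq:xij} and the right-hand analogues $\sfX^j=\sfX_{1-j}$, $\sfX^{j\pm2N}=\sfX^j$ noted there, to one of the standard modules ${}^a\sfX^b,\ {}_a\sfX^b,\ {}^a\sfX_b,\ {}_a\sfX_b$ with $1\le a\le b\le N$ (no reversal is needed, as $a\le b$ holds automatically in each case), and reads off its head, which by the diagrams is the sum of the $S_\ell$ with $\ell$ running over the peak positions of that diagram (an arithmetic progression of step $2$). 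Collating the four cases one obtains: for $0\le k\le N-1$, $S_j\subseteq\mathsf{hd}(\Omega^k S_i)$ if and only if $|i-j|\le k\le N-1-|N+1-i-j|$ and $k\equiv i-j\pmod 2$ — equivalently, if and only if $Q_{i,j}(t)$ has a nonzero $t^k$-coefficient. Hence $\sum_{k=0}^{N-1}\chi_k t^k=Q_{i,j}(t)$.

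It remains to treat $N\le k\le 2N-1$. Here $\chi_{N+m}=[\,S_j\subseteq\mathsf{hd}(\Omega^m S_{N+1-i})\,]$, so applying the previous paragraph to the pair $(N+1-i,j)$ yields $\sum_{k=N}^{2N-1}\chi_k t^k=t^N Q_{N+1-i,j}(t)$. Since the exponents of $Q_{N+1-i,j}$ are $|i-j|,|i-j|+2,\dots,N-1-|N+1-i-j|$, those of $t^N Q_{N+1-i,j}(t)$ run over $N+|N+1-i-j|,\dots,2N-1-|i-j|$, which is exactly the exponent set of $t^{2N-1}Q_{i,j}(t^{-1})$; thus $t^N Q_{N+1-i,j}(t)=t^{2N-1}Q_{i,j}(t^{-1})$. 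Adding the two halves and dividing by $1-t^{2N}$ gives the asserted formula. The one genuinely delicate point is the four-case reduction in the third paragraph: one must keep careful track of how the polarity (peak versus valley) at each end is flipped by each reflection in order to read off the head of the standard form correctly; everything else is elementary bookkeeping together with an easy manipulation of finite and geometric series.
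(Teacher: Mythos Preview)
Your proposal is correct and follows essentially the same route as the paper: reduce via $\Omega^{2N}(S_i)\simeq S_i$ and $\Omega^N(S_i)\simeq S_{N+1-i}$ to the range $0\le k\le N-1$, identify the head of ${}^{i-k}\sfX^{i+k}$ by a short case analysis (the paper assumes $i\le j$ and splits on $i+j\le N+1$ rather than on which ends need reflecting, but the content is identical), and then use $Q_{N+1-i,j}(t)=t^{N-1}Q_{i,j}(t^{-1})$. One slip to fix: in your last paragraph the exponents you list for $Q_{N+1-i,j}$ are actually those of $Q_{i,j}$ --- the correct range is $|N+1-i-j|,\dots,N-1-|i-j|$, which is what you implicitly use in the very next clause.
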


\begin{proof} Without loss of generality we can assume that $i \leq j$. 
Let $k \in \{0,\ldots,N-1\}$. If $i+j \leq N+1$, the simple module $S_j$ appears in the head of  ${}^{i-k} \sfX^{i+k}$ if and only if $k=j-i, j-i+2, \ldots, j+i-2$. The limit cases are indeed ${}^{2i-j} \sfX^{j}$ for $k = j-i$ and ${}^{2-j} \sfX^{2i+j-2} = {}_{j-1}\sfX^{2i+j-2}$ for $k = j+i-2$. Note that if $j-i \leq k \leq i+j-2$ then $j \leq i+k$ and $j \leq 2N-i-k$ so that $S_j$ appears in the head of ${}^{i-k} \sfX^{i+k} = {}^{i-k} \sfX_{2N-i-k+1}$ whenever $k$ has the suitable parity.  If $i+j > N+1$ one must ensure that $j \leq 2N-i-k$ and therefore $S_j$ appears in the head of  ${}^{i-k} \sfX^{i+k}$ if and only if $k=j-i, j-i+2, \ldots, 2N-i-j$. Consequently we have
\begin{equation} \label{eq:ext}
\begin{aligned}
\sum_{k = 0}^{N-1} \mathsf{dim}_\mathbb{F} \, \mathsf{Ext}_A^k(S_i,S_j) t^k & \, = t^{j-i} + t^{j-i+2} + \cdots + t^{N-1-|N+1-j-i|} \\[-10pt]
& \, =  t^{|j-i|} + t^{|j-i|+2} + \cdots + t^{N-1-|N+1-j-i|} \\
& \, = Q_{i,j}(t).\end{aligned}
\end{equation}
Now the relation 
$$\Omega^N (S_i)  = {}^{i-N} \sfX^{i+N} = {}_{1+N-i}\sfX_{1-N-i} = {}_{1+N-i}\sfX_{1+N-i} = S_{N+1-i}$$ 
yields
$$\sum_{k = 0}^{2N-1} \mathsf{dim}_\mathbb{F} \, \mathsf{Ext}_A^k(S_i,S_j) t^k = \sum_{k = 0}^{N-1} \mathsf{dim}_\mathbb{F} \, \mathsf{Ext}_A^k(S_i,S_j) t^k + t^N \sum_{k = 0}^{N-1} \mathsf{dim}_\mathbb{F} \, \mathsf{Ext}_A^k(S_{N+1-i},S_j) t^k.$$
and the proposition follows from \eqref{eq:ext} after observing that $Q_{N+1-i,j}(t) = t^{N-1} Q_{i,j}(t^{-1})$.
\end{proof}

\section{Algebra structure}

\subsection{Minimal resolution}
Given $1\leq i \leq N- 1$ we fix non-zero maps $f_i : P_{i} \longrightarrow P_{i+1}$ and
$f_i^* : P_{i+1} \longrightarrow P_{i}$ such that $f_i^* \circ f_i + f_{i-1} \circ f_{i-1}^* = 0$ for all $2\leq i \leq N- 1$.
Given $1 \leq i \leq j \leq N$ with $j-i$ even we denote by ${}_iP_j$ the following projective $A$-module
$$ {}_iP_j :=  P_i \oplus P_{i+2} \oplus \cdots \oplus P_{j-2} \oplus P_{j}.$$ 
For $1\leq i < j \leq N$ with $j-i$ even we let $d_{i,j} : {}_{i}P_{j} \longrightarrow {}_{i+1}P_{j-1}$ be the morphism of $A$-modules corresponding to the following matrix:
$$ d_{i,j} = \begin{bmatrix} f_{i} & f_{i+1}^* & 0 & \cdots & \cdots & 0 \\
0 & f_{i+2} & f_{i+3}^* & 0 &  & \vdots   \\  \vdots  & \ddots & \ddots& \ddots & \ddots &  \vdots\\ \vdots  & & \ddots & \ddots & \ddots & 0 \\ 0 & \cdots & \cdots & 0 & f_{j-2} & f_{j-1}^* \end{bmatrix}$$
The definition of ${}_iP_j$ extends to any integers $i,j \in \mathbb{Z}$ with the convention that 
\begin{equation}\label{eq:pij}
{}_iP_j = {}_{j+1}P_{i-1}, \quad {}_{i}P_{-j} = {}_{i}P_j, \quad {}_iP_{j\pm2N} = {}_iP_{j}.
\end{equation}
Note that these relations imply ${}_{1-i}P_{j} = {}_{1+i}P_j$ and ${}_{i\pm2N}P_{j} = {}_iP_{j}$.
Furthermore, the definition of $d_{i,j}$ extends naturally to any pair $i,j$ if we set in addition
$$d_{i,i} = (-1)^i f_i^* f_i = (-1)^{i-1} f_{i-1} f_{i-1}^*,$$
a map from ${}_iP_i = P_i$ to  ${}_{i+1}P_{i-1} = P_i$. With this notation one checks that for all $k > 0$ the image of the map
$d_{i\mn k,i\pl k} : {}_{i-k}P_{i+k} \longrightarrow {}_{i-k+1}P_{i+k-1}$ is isomorphic to ${}^{i-k} \sfX^{i+k} \simeq \Omega^k(S_i)$ so that the bounded above complex
$$R_i :=  \cdots \xrightarrow{d_{i\mn k\mn1,i\pl k\pl1}} {}_{i\mn k}P_{i\pl k}  \xrightarrow{d_{i\mn k,i\pl k}} \cdots \xrightarrow{d_{i\mn 2,i\pl 2}}  {}_{i\mn 1}P_{i\pl 1} \xrightarrow{d_{i\mn1,i\pl1}} P_i \longrightarrow 0$$
forms a minimal projective resolution of $S_i$.

\subsection{Generators and relations}
We will consider two kinds of generators for the $\mathsf{Ext}$-algebra, of respective degrees $1$ and $N$. 
We start by defining a map $x_i \in \mathsf{Hom}_{\mathsf{Ho}(A)}(R_i,R_{i+1}[1])$ for any $1\leq i \leq N- 1$. 
Let $k$ be a positive integer. If $k \notin N\mathbb{Z}$, the projective modules ${}_{i\mn k}P_{i\pl k}$ and ${}_{i\pl 1\mn(k\mn1)}P_{i\pl1\pl(k-1)} = {}_{i\mn k\pl2}P_{i\pl k}$ have at least one indecomposable summand in common and we can consider the map $X_{i,k} : {}_{i\mn k}P_{i\pl k} \longrightarrow {}_{i\mn k\pl2}P_{i\pl k}$ given by the identity map on the common factors. If $k \in N+2N\mathbb{Z}$ then from the relations \eqref{eq:pij} we have 
$${}_{i\mn k}P_{i\pl k} = {}_{i\mn N}P_{i\pl N} = {}_{i\pl N\pl1}P_{i\mn N\mn1} ={}_{\mn i\mn N+1}P_{\mn i\pl N\pl1} = P_{N\pl1\mn i }$$
and 
$${}_{i\mn k\pl 2}P_{i\pl k} = {}_{i\mn N\pl2}P_{i\pl N} = {}_{N\mn i}P_{\mn N\mn i} =P_{N\mn i }.$$
In that case we set $X_{i,k} := (-1)^{N-i}f_{N-i}^*$. If $k \in 2N\mathbb{Z}$ then ${}_{i\mn k}P_{i\pl k}  = P_i$, ${}_{i\mn k\pl 2}P_{i\pl k} = {}_{i+2}P_i = P_{i+1}$ and we set $X_{i,k} := (-1)^i f_i$. If $k \geq 0$ we set $X_{i,k} := 0$. Then the family of morphisms of $A$-modules $X_i := (X_{i,k})_{k\in \mathbb{Z}}$ defines a morphism of complexes of $A$-modules from $R_i$ to $R_{i+1}[1]$ and we denote by $x_i$ its image in $\mathsf{Ho}(A)$. 

\smallskip Similarly we define a map $X_i^* : R_{i+1} \longrightarrow R_{i}[1]$ by exchanging the role of $f$ and $f^*$. More precisely we consider in that case $X_{i,-N}^* := (-1)^{N-i}f_{N-i}$ and $X_{i,-2N}^* := (-1)^{i} f_{i}^*$. We denote by $x_i^*$ the image of $X_i^*$ in $\mathsf{Ho}(A)$.

\smallskip
Assume now that $1\leq i \leq N$. The modules ${}_{i\mn k}P_{i\pl k}$ and ${}_{(N\pl1\mn i)\mn (k\mn N)}P_{(N\pl1\mn i)\pl (k\mn N)}$ are equal, which means that starting from the degree $-N$, the terms of the complexes $R_i$ and $R_{N+1-i}[N]$ coincide. We denote by $Y_i : R_i \longrightarrow R_{N+1-i}[N]$ the natural projection between 
$R_i$ and its obvious truncation at degrees $\leq -N$, and by $y_i$ its image in $\mathsf{Ho}(A)$. 

\begin{lemma}\label{lem:relations}
The following relations hold in $\mathrm{End}_{\mathsf{Ho}(A)}^\bullet(\bigoplus R_i)$:
\begin{itemize}
\item[$\mathrm{(a)}$] $ x_1^*\circ x_1 = x_{N-1} \circ x_{N-1}^* = 0$;
\item[$\mathrm{(b)}$] $x_i \circ x_i^* = x_{i+1}^* \circ x_{i+1} $ for all $i = 1,\ldots,N-2$;
\item[$\mathrm{(c)}$] $y_{i+1} \circ x_i  =x_{N-i}^* \circ  y_i  $ for all $i = 1,\ldots,N-1$;
\item[$\mathrm{(d)}$] $y_{i} \circ x_i^* = x_{N-i} \circ y_{i+1}  $ for all $i = 1,\ldots,N-1$.
\end{itemize}

\end{lemma}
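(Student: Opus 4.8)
The plan is to verify each of the four relations by working directly with the explicit chain maps $X_i$, $X_i^*$, $Y_i$ defined above, computing their composites degree by degree, and exhibiting a chain homotopy to zero (or between the two sides) when they do not already agree on the nose. Since all the maps involved are built out of the generators $f_i$, $f_i^*$ together with identity maps on common projective summands, every composite is again a block map whose entries are words in the $f$'s and $f^*$'s, and the relevant relations among those words are $f_i^* f_i + f_{i-1} f_{i-1}^* = 0$ (the defining relation), $f_1^* f_1 = 0$ and $f_{N-1} f_{N-1}^* = 0$ (which hold because $P_1$ and $P_N$ are uniserial of length $3$ at the ends of the line, so the corresponding length-$2$ compositions into the socle-less quotient vanish), and the fact that $f_i^* f_i^* = f_i f_i = 0$ for length reasons. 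I would record these once at the start.

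For $\mathrm{(a)}$ and $\mathrm{(b)}$ I would compute $X_{i+1}^* \circ X_i$ and $X_i \circ X_i^*$ termwise. In nonzero cohomological degrees $k \notin N\mathbb{Z}$ both composites are, up to sign, the identity on the overlapping summand shifted by two places, so they literally agree as chain maps (this handles $\mathrm{(b)}$ away from the "wrap-around" degrees), while for $\mathrm{(a)}$ the extreme summand $P_1$ (resp. $P_N$) forces the relevant entry to be $f_1^* f_1 = 0$ (resp. $f_{N-1}f_{N-1}^* = 0$). In the degrees $k \in N\mathbb{Z}$ the composite involves $f_{N-i}^* f_{N-i}$ or $f_i f_i^*$, and here the defining anticommutation relation $f_i^* f_i + f_{i-1}f_{i-1}^* = 0$ is exactly what makes the two sides of $\mathrm{(b)}$ equal after tracking signs; the signs $(-1)^i$, $(-1)^{N-i}$ built into the definitions of $X_{i,k}$ are chosen precisely so that everything lines up. I expect that with care these composites agree as honest chain maps, so no homotopy is needed for $\mathrm{(a)}$, $\mathrm{(b)}$.

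For $\mathrm{(c)}$ and $\mathrm{(d)}$ the point is that $Y_i$ is simply the projection of $R_i$ onto its truncation below degree $-N$, and for $k \geq N$ the complex $R_i$ in degree $k$ coincides with $R_{N+1-i}[N]$ in degree $k-N$ by the identity ${}_{i-k}P_{i+k} = {}_{(N+1-i)-(k-N)}P_{(N+1-i)+(k-N)}$ from \eqref{eq:pij}. Thus both sides of $\mathrm{(c)}$ (resp. $\mathrm{(d)}$) are, in each degree $\leq -N$, the shifted overlap map $X_{*,*}$ composed with a projection, and one checks they are represented by the same block matrix entry after matching indices $N-i \leftrightarrow$ the image of $i$ under $j \mapsto N+1-j$; again the sign conventions $(-1)^{N-i}$ vs. $(-1)^i$ are set up so the two chain maps are equal, not merely homotopic. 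The only subtlety is the boundary degree $k$ near $N$ itself, where the truncation map is not yet the identity on all summands; there I would either note that the relevant component is forced by the structure of $\Omega^N(S_i) = S_{N+1-i}$ (computed in Proposition~\ref{prop:poincare}) or supply an explicit nullhomotopy supported in that single degree.

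The main obstacle will be bookkeeping the signs and the index shifts consistently across the three regimes $k \notin N\mathbb{Z}$, $k \in N + 2N\mathbb{Z}$, $k \in 2N\mathbb{Z}$ simultaneously for all four relations; the mathematical content is light, but it is easy to be off by a sign in one wrap-around term. I would organize the verification by first proving $\mathrm{(b)}$ and $\mathrm{(d)}$ in full, then deduce $\mathrm{(a)}$ from $\mathrm{(b)}$ by specializing to the endpoints (where one summand degenerates and the composite is literally $f_1^* f_1$ or $f_{N-1} f_{N-1}^*$), and $\mathrm{(c)}$ from $\mathrm{(d)}$ by the symmetry $f \leftrightarrow f^*$, $i \leftrightarrow N-i$ that visibly swaps $x_i \leftrightarrow x_i^*$ and fixes the collection $\{y_i\}$.
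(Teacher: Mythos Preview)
Your overall strategy --- compare the chain maps degree by degree and supply homotopies where needed --- is the right one, and for parts $\mathrm{(c)}$ and $\mathrm{(d)}$ your argument is essentially what the paper does: the truncation maps $Y_i$ are identities in degrees $\leq -N$, the maps $X_i$ and $X_{N-i}^*[N]$ literally coincide there, so the two sides are equal as chain maps (no homotopy is needed, not even at the boundary degree).

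However, there are two genuine errors in your treatment of $\mathrm{(a)}$ and $\mathrm{(b)}$.

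First, the relation $f_1^* f_1 = 0$ that you want to use is \emph{false}. The projective $P_1$ is uniserial with composition series $S_1,S_2,S_1$, and the composite $f_1^* f_1 : P_1 \to P_2 \to P_1$ is the nonzero endomorphism with image the socle $S_1$; indeed the paper uses exactly this map as the differential $d_{1,1} = -f_1^* f_1$ in the minimal resolution. So you cannot get $\mathrm{(a)}$ by a pointwise vanishing argument. The paper instead observes that $\mathsf{Ext}^2_A(S_1,S_1)=0$ (a direct consequence of the Poincar\'e series computation), which forces $x_1^* \circ x_1 = 0$ in the homotopy category without any chain-level work.

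Second, your expectation that for $\mathrm{(b)}$ ``these composites agree as honest chain maps, so no homotopy is needed'' is incorrect. The paper computes $X_i \circ X_i^*$ and $X_{i+1}^* \circ X_{i+1}$ explicitly and finds that they \emph{differ} at the wrap-around degrees $k \equiv 0,-1 \pmod N$: the difference at degrees $-N,-N-1$ is the pair of maps $(-1)^{N-i}\begin{bmatrix} f_{N-1-i} & f_{N-i}^* \end{bmatrix}$ and $(-1)^{N-i}\begin{bmatrix} f_{N-1-i}^* \\ f_{N-i} \end{bmatrix}$, which are visibly nonzero. The anticommutation relation $f_i^* f_i + f_{i-1} f_{i-1}^* = 0$ does not make these cancel; what it does is make the difference equal to $s \circ d + d \circ s$ for a homotopy $s$ supported on the single summand $P_{N-i}$ (resp.\ $P_{i+1}$) at each wrap-around degree. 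The paper writes this homotopy down explicitly. So a homotopy is essential for $\mathrm{(b)}$, and the heart of the proof is constructing it.
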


\begin{proof}
If $N=1$ there are no relation to check. Therefore we assume $N \geq 2$.
The relations in (a) follow from the fact that $\mathsf{Ext}^2_A(S_1,S_1) = \mathsf{Ext}^2_A(S_N,S_N) =0$, which is for example a consequence of Proposition \ref{prop:poincare} when $N \geq 2$.

\smallskip

To show (c), we observe that the morphism of complexes $X_{i} : R_i \longrightarrow R_{i+1}[1]$ defined above coincide with  $X_{N-i}^*[N] : R_{N+1-i}[N] \longrightarrow R_{N-i}[N+1]$ in degrees less than $-N$. Since $Y_i$ and $Y_{i+1}$ are just obvious truncations we actually have $Y_{i+1} \circ X_i  =X_{N-i}^* \circ  Y_i$. The relation (d) is obtained by a similar argument.

\smallskip

We now consider (b). The morphism of complexes $X_i \circ X_i^*$ and $X_{i+1}^* \circ X_{i+1}$ coincide at every degree $k$ except when $k $ is congruent to $0$ or $-1$ modulo $N$. Let us first look in details at the degrees $-N$ and $-N-1$. The map $X_i \circ X_i^*$ is as follows:
$$ \begin{tikzcd}[ampersand replacement=\&]
\cdots \arrow[r] \arrow[d]\&[10pt] P_{N\mn1\mn i} \oplus P_{N\pl 1\mn i} \arrow[r,"{\begin{bmatrix}  f_{N\mn1\mn i} & f_{N\mn i}^* \end{bmatrix}}"] \arrow[d,"{\begin{bmatrix}  0 & 1 \end{bmatrix}}"] \&[10pt] P_{N\mn i} \arrow[r,"(-1)^{N\mn i}f_{N\mn i}^* \circ f_{N\mn i}"] \arrow[d,"(-1)^{N\mn i}f_{N\mn i}"] \&[10pt] P_{N\mn i} \arrow[d,"{\begin{bmatrix}  1 \\ 0 \end{bmatrix}}"]  \\[40pt]
 P_{N\mn i} \oplus P_{N\pl2\mn i} \arrow[r,"{\begin{bmatrix}  f_{N\mn i} & f_{N\pl1\mn i}^* \end{bmatrix}}"]  \arrow[d,"{\begin{bmatrix}  1 & 0 \end{bmatrix}}"]\& P_{N\pl1\mn i} \arrow[r,"(-1)^{N\mn i}f_{N\mn i} \circ f_{N\mn i}^*"] \arrow[d,"(-1)^{N\mn i}f_{N\mn i}^*"] \& P_{N\pl1\mn i} \arrow[r,"{\begin{bmatrix}  f_{N\mn i}^*\\  f_{N\pl1\mn i} \end{bmatrix}}"] \arrow[d,"{\begin{bmatrix}  0 \\ 1 \end{bmatrix}}"] \& P_{N\mn i} \oplus P_{N\pl2\mn i} \arrow[d] \\[40pt]
P_{N\mn i} \arrow[r,"(-1)^{N\mn i}f_{N\mn i}^* \circ f_{N\mn i}"] \& P_{N\mn i} \arrow[r,"{\begin{bmatrix}  f_{N\mn1 \mn i}^*\\  f_{N\mn i} \end{bmatrix}}"] \& P_{N\mn1 \mn i} \oplus P_{N\pl 1\mn i} \arrow[r] \& \cdots 
\end{tikzcd}
$$
whereas the map $X_{i+1}^* \circ X_{i+1}$ corresponds to the following composition:
$$ \begin{tikzcd}[ampersand replacement=\&]
\cdots \arrow[r] \arrow[d]\&[10pt] P_{N\mn1\mn i} \oplus P_{N\pl 1\mn i} \arrow[r,"{\begin{bmatrix}  f_{N\mn1\mn i} & f_{N\mn i}^* \end{bmatrix}}"] \arrow[d,"{\begin{bmatrix}  1 & 0 \end{bmatrix}}"] \&[10pt] P_{N\mn i} \arrow[r,"(-1)^{N\mn i}f_{N\mn i}^* \circ f_{N\mn i}"] \arrow[d,"(-1)^{N\mn1\mn i}f_{N\mn1\mn i}^*"] \&[10pt] P_{N\mn i} \arrow[d,"{\begin{bmatrix}  0 \\ 1 \end{bmatrix}}"]  \\[40pt]
 P_{N\mn2\mn i} \oplus P_{N\mn i} \arrow[r,"{\begin{bmatrix}  f_{N\mn2\mn i} & f_{N\mn1\mn i}^* \end{bmatrix}}"]  \arrow[d,"{\begin{bmatrix}  0 & 1 \end{bmatrix}}"]\& P_{N\mn1\mn i} \arrow[r,"(-1)^{N\mn1\mn i}f_{N\mn1\mn i}^* \circ f_{N\mn1\mn i}"] \arrow[d,"(-1)^{N\mn1\mn i}f_{N\mn1\mn i}"] \& P_{N\mn1\mn i} \arrow[r,"{\begin{bmatrix}  f_{N\mn2\mn i}^*\\  f_{N\mn1\mn i} \end{bmatrix}}"] \arrow[d,"{\begin{bmatrix}  1 \\ 0 \end{bmatrix}}"] \& P_{N\mn2\mn i} \oplus P_{N\mn i} \arrow[d] \\[40pt]
P_{N\mn i} \arrow[r,"(-1)^{N\mn i}f_{N\mn i}^* \circ f_{N\mn i}"] \& P_{N\mn i} \arrow[r,"{\begin{bmatrix}  f_{N\mn1 \mn i}^*\\  f_{N\mn i} \end{bmatrix}}"] \& P_{N\mn1 \mn i} \oplus P_{N\pl 1\mn i} \arrow[r] \& \cdots 
\end{tikzcd}
$$
We deduce that at the degrees $-N$ and $-N-1$ the map $X_i \circ X_i^* - X_{i+1}^* \circ X_{i+1}$ is given by
$$ \begin{tikzcd}[ampersand replacement=\&]
 P_{N\mn1\mn i} \oplus P_{N\pl 1\mn i} \arrow[r,"{\begin{bmatrix}  f_{N\mn1\mn i} & f_{N\mn i}^* \end{bmatrix}}"] \arrow[d,swap,"{(-1)^{N\mn i} \begin{bmatrix}  f_{N\mn1\mn i} & f_{N\mn i}^* \end{bmatrix}}"] \&[10pt] P_{N\mn i} \arrow[d,"(-1)^{N\mn i}{\begin{bmatrix}  f_{N\mn1\mn i}^* \\ f_{N\mn i} \end{bmatrix}}"] \\[60pt]
P_{N\mn i} \arrow[r,"{\begin{bmatrix}  f_{N\mn1 \mn i}^*\\  f_{N\mn i} \end{bmatrix}}"] \& P_{N\mn1 \mn i} \oplus P_{N\pl 1\mn i} \end{tikzcd} $$
A similar picture holds at the degrees $-2N$ and $-2N-1$:
$$ \begin{tikzcd}[ampersand replacement=\&]
 P_{i} \oplus P_{i\pl2} \arrow[r,"{\begin{bmatrix}  f_{i} & f_{i\pl1}^* \end{bmatrix}}"] \arrow[d,swap,"{(-1)^{i} \begin{bmatrix}  f_{i} & f_{i\pl1}^* \end{bmatrix}}"] \&[10pt] P_{i\pl1} \arrow[d,"(-1)^{i}{\begin{bmatrix}  f_{i}^* \\ f_{i+1} \end{bmatrix}}"] \\[60pt]
P_{i+1} \arrow[r,"{\begin{bmatrix}  f_{i}^*\\  f_{i\pl1} \end{bmatrix}}"] \& P_{i} \oplus P_{i\pl2} \end{tikzcd} $$
Using the map $s : X_{i+1} \rightarrow X_{i+1}[1]$ defined by 
$$s_k := \left\{ \begin{array}{ll} (-1)^{N-i} \mathrm{Id}_{P_{N-i}} & \text{if $-k \in N + 2N\mathbb{N}$}, \\
(-1)^{i} \mathrm{Id}_{P_{i+1}} & \text{if $-k \in 2N + 2N\mathbb{N}$}, \\
 0 & \text{otherwise}, \end{array}\right.$$
 we see that $X_i \circ X_i^* - X_{i+1}^* \circ X_{i+1}$ is null-homotopic, which proves that $x_i \circ x_i^* - x_{i+1}^* \circ x_{i+1}$ is zero in $\mathsf{Hom}_{\mathsf{Ho}(A)}(P_{i+1},P_{i+1}[2])$.
\end{proof}

The next proposition shows that the relations given in Lemma~\ref{lem:relations} are actually enough to describe the $\mathsf{Ext}$-algebra. We use here the concatenation of paths as opposed to the composition of arrows, which explains the discrepancy in the relations.

\begin{prop}\label{prop:main}
The $\mathsf{Ext}$-algebra of $A$ is isomorphic to the path algebra associated with the following quiver
$$ \begin{tikzcd}
S_1 \arrow[r,bend left,swap,"x_1"] \arrow[rrrrrr,bend left=45,swap,"y_1"] & S_2  \arrow[l,bend left,swap,"x_1^*"]  \arrow[r,bend left,swap,"x_2"]  \arrow[rrrr,bend left=45,swap,"y_2"] &  \arrow[l,bend left,swap,"x_2^*"] S_3 \arrow[rr,bend left=45,swap,"y_3"] &[-8mm]  \cdots&[-8mm] S_{N-2} \arrow[ll,bend left=45,swap,"y_{N-2}"]  \arrow[r,bend left,swap,"x_{N-2}"] & S_{N-1} \arrow[l,bend left,swap,"x_{N-2}^*"]  \arrow[llll,bend left=45,swap,"y_{N-1}"]  \arrow[r,bend left,swap,"x_{N-1}"]&  \arrow[l,bend left,swap,"x_{N-1}^*"] S_{N\hphantom{-1}} \arrow[llllll,bend left=45,swap,"y_N"]   
\end{tikzcd} $$
with $x_i$'s of degree $1$ and $y_i$'s of degree $N$, subject to the relations
\begin{itemize}
\item[$\mathrm{(a)}$] $x_1 x_1^* = x_{N-1}^*x_{N-1} = 0$;
\item[$\mathrm{(b)}$] $x_i^* x_i = x_{i+1}x_{i+1}^* $ for all $i = 1,\ldots,N-2$;
\item[$\mathrm{(c)}$] $x_i y_{i+1} = y_i x_{N-i}^* $ for all $i = 1,\ldots,N-1$;
\item[$\mathrm{(d)}$] $x_i^* y_{i} = y_{i+1} x_{N-i} $ for all $i = 1,\ldots,N-1$.
\end{itemize}
\end{prop}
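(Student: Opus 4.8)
The plan is to construct an explicit comparison homomorphism $\Phi$ from the path algebra with relations to the $\mathsf{Ext}$-algebra and to show it is bijective in each degree. Write $\Lambda=\mathbb F Q/I$ for $Q$ the quiver in the statement and $I$ the homogeneous ideal generated by the relations (a)--(d), graded by placing the $x_i,x_i^*$ in degree $1$ and the $y_i$ in degree $N$. Sending the vertex idempotent $e_i$ to $\mathrm{id}_{R_i}$ and each arrow to the homotopy class of the morphism of complexes of the same name extends to a homomorphism of graded algebras $\Phi\colon\Lambda\to\mathrm{End}^\bullet_{\mathsf{Ho}(A)}(\bigoplus_i R_i)\cong\mathsf{Ext}^\bullet_A(\bigoplus_i S_i,\bigoplus_i S_i)$: it is well defined because the degree-$0$ part of the target is $\prod_i\mathbb F\,\mathrm{id}_{R_i}$ and because the relations (a)--(d) of $\Lambda$ translate, under the order-reversal between path concatenation and composition in $\mathsf{Ho}(A)$, precisely into the relations established in Lemma~\ref{lem:relations}. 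Both algebras are $\mathbb Z_{\ge0}$-graded with finite-dimensional graded pieces, so it suffices to prove $\Phi$ is onto and injective in each degree. (For $N=1$ there are no $x$'s and the claim reduces to $\mathsf{Ext}^\bullet_{\mathbb F[\varepsilon]/\varepsilon^2}(\mathbb F,\mathbb F)\cong\mathbb F[y_1]$, so assume $N\ge2$.)

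For surjectivity I would exploit that $R_i$ is a minimal resolution over the self-injective algebra $A$: all differentials land in the radical, so a morphism of complexes $R_i\to R_j[k]$ is null-homotopic iff it vanishes on $R_i^{-k}$, and the class it represents in $\mathsf{Ext}^k_A(S_i,S_j)\cong\mathrm{Hom}_A(\Omega^kS_i,S_j)\cong\mathrm{Hom}_A(R_i^{-k},S_j)$ is read off from its degree-$(-k)$ component $R_i^{-k}\to(R_j[k])^{-k}=P_j$ composed with the augmentation. Unwinding the definitions, in the relevant degree each of $X_i$, $X_i^*$, $Y_i$ — and hence any composite of them — is a projection of a module ${}_aP_b$ onto an indecomposable summand, so $\Phi(y_i)$ generates $\mathsf{Ext}^N_A(S_i,S_{N+1-i})$, and for a word $w$ in the $x_p,x_p^*$ of degree $\ell\le N-1$ from $S_i$ to $S_j$ one sees from the head of ${}^{i-\ell}\sfX^{i+\ell}$ (as in the proof of Proposition~\ref{prop:poincare}) whether $\Phi(w)=0$ or $\Phi(w)$ generates $\mathsf{Ext}^\ell_A(S_i,S_j)$. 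Since each $\mathsf{Ext}^k_A(S_i,S_j)$ is at most one-dimensional and, via $\Omega^N(S_i)\simeq S_{N+1-i}$, composing $R_i$ with $Y_i$ identifies $\mathsf{Ext}^{k-N}_A(S_{N+1-i},S_j)$ with $\mathsf{Ext}^k_A(S_i,S_j)$ for $k\ge N$, one proceeds by induction on $k$: in degrees $\le N-1$ exhibit an explicit nonzero word in the $x_p,x_p^*$ (how far to descend before re-ascending being dictated by the head of ${}^{i-k}\sfX^{i+k}$), and in higher degrees take $y_i$ times a word realizing $\mathsf{Ext}^{k-N}_A(S_{N+1-i},S_j)$. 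This gives surjectivity.

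Surjectivity already yields $\dim_{\mathbb F}e_j\Lambda_ke_i\ge\dim_{\mathbb F}\mathsf{Ext}^k_A(S_i,S_j)$, so it remains to bound $\dim_{\mathbb F}e_j\Lambda_ke_i$ from above by a normal-form analysis. Relations (c)--(d), read as $x_py_{p+1}=y_px_{N-p}^*$ and $x_p^*y_p=y_{p+1}x_{N-p}$, move every letter $y$ to the left past all the $x_p,x_p^*$ (reflecting $p\mapsto N-p$ and swapping $x\leftrightarrow x^*$ at each pass), so any path equals $\pm\,Y\cdot W$ with $Y$ a composable product of $y$'s — necessarily $(y_ry_{N+1-r})^c$ or $(y_ry_{N+1-r})^cy_r$ — and $W$ a word in the $x_p,x_p^*$. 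Relations (a)--(b), which identify the two length-two cycles $x_{p-1}^*x_{p-1}=x_px_p^*$ at every inner vertex $S_p$ and kill these cycles at $S_1$ and $S_N$, then reduce $W$: pushing "turns" toward the ends, one shows $W$ is either zero in $\Lambda$ or $\pm$ a unique single-turn word of degree $<N$, and that these correspond bijectively to the triples $(i,j,\ell)$, $0\le\ell\le N-1$, with $\mathsf{Ext}^\ell_A(S_i,S_j)\ne0$. Counting the resulting normal forms of degree $k$ from $S_i$ to $S_j$ and matching with the Poincar\'e series of Proposition~\ref{prop:poincare} gives $\dim_{\mathbb F}e_j\Lambda_ke_i\le\dim_{\mathbb F}\mathsf{Ext}^k_A(S_i,S_j)$, hence equality in every degree, so $\Phi$ is an isomorphism.

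The main obstacle is this last step: there is real combinatorial content in showing that relations (a)--(b) alone collapse every word in the $x_p,x_p^*$ to a single-turn word of degree $<N$ — in particular that multi-turn words, and single-turn words that are too long, all lie in $I$ — and that the rewritings coming from (c)--(d) never create $x$-words that escape this analysis. The bookkeeping near the two ends $S_1,S_N$ (where the length-two cycles vanish) and around the "wrap" degree $N$ (where $y$ first meets $x$) is exactly where the relations must be used carefully; by contrast the construction of $\Phi$ and the surjectivity argument are essentially formal once the minimality of the resolutions $R_i$ is invoked.
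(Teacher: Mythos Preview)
Your proposal is correct and follows essentially the same route as the paper: define the map $\Phi$ from the path algebra with relations to the $\mathsf{Ext}$-algebra (well-defined by Lemma~\ref{lem:relations}), then use a normal-form reduction---pushing the $y$'s past the $x$'s via (c)--(d) and collapsing the $x$-words via (a)--(b) to single-turn words of bounded length---to bound $\dim e_j\Lambda_k e_i$ above by the Poincar\'e series of Proposition~\ref{prop:poincare}. The paper's proof is slightly terser in that it takes surjectivity of $\Phi$ essentially for granted, whereas you spell out why the explicit words hit every nonzero $\mathsf{Ext}$-class; conversely the paper is a bit more explicit about the single-turn normal form $x_i\cdots x_{r-1}x_{r-1}^*\cdots x_j^*$ and the exact length bound, which is precisely the combinatorial step you correctly flag as the main obstacle.
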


\begin{proof}
Let $Q$ (resp. $I$) be the quiver (resp. the set of relations) given in the proposition. Let $\Gamma = \mathbb{F} Q/\langle I \rangle$ be the corresponding path algebra. By Lemma \ref{lem:relations}, the $\mathsf{Ext}$-algebra of $A$ is a quotient of $\Gamma$. To show that $A \simeq \Gamma$ it is enough to show that the graded dimension of $\Gamma$ is smaller than that of $A$. 

\smallskip

Let $1 \leq i , j \leq N$ and $\gamma$ be a path between $S_i$ and $S_j$ in $Q$ containing only $x_l$'s. Let $k$ be the length of $\gamma$. We have $k \geq |i-j|$, which is the length of the minimal path from $S_i$ to $S_j$. 
Using the relations, there exist loops $\gamma_1$ and $\gamma_2$ around $S_i$ and $S_j$ respectively such that
$$\gamma = \left\{ \begin{array}{ll} \gamma_1 x_i  x_{i+1} \cdots x_{j-1} = x_i  x_{i+1} \cdots x_{j-1} \gamma_2  & \text{if $i\leq j$};\\
 \gamma_1  x_{i-1}^*  x_{i-2}^* \cdots  x_j^* =  x_{i-1}^*  x_{i-2}^* \cdots  x_j^*   \gamma_2 & \text{otherwise}. \end{array}\right.$$
Maximal non-zero loops starting and ending at $S_i$ are either $x_{i-1}^*x_{i-2}^*\cdots x_1^* x_1  x_2 \cdots x_{i-1}$ 
or $x_{i}x_{i+1}\cdots x_{N-1} x_{N-1}^*\cdots   x_{i+1}^* \cdots x_{i}^*$ depending on whether $S_i$ is closer to $S_1$ or $S_N$. Indeed, any longer loop will involve $x_1 x_1^*$ or $x_{N-1}^* x_{N-1}$, which are zero by (a). Therefore if $\mathsf{deg}(\gamma_1) > 2(i-1)$ or 
$\mathsf{deg}(\gamma_1) > 2(N-i)$ then $\gamma_1 =0$. Using a similar argument for loops around $S_j$ we deduce that $\gamma$ is zero whenever 
$$k = \mathsf{deg}(\gamma) > |i-j| + 2\,\mathsf{min}(i-1,j-1, N-j,N-j)$$
which is equivalent to $k= \mathsf{deg}(\gamma) > N-1 - |N+1-j-i|$. This proves that $\gamma$ is zero unless $ |i-j| \leq  k \leq 
N-1 - |N+1-j-i|$ in which case it equals to 
$$\gamma =  x_i  x_{i+1} \cdots x_{r-1} x_{r-1}^* x_{r-2}^*\cdots x_j^* $$
where $k = 2r-i-j$.

\smallskip

Assume now that $\gamma$ is any path between $S_i$ and $S_j$ in $Q$. Using the relations one can write $\gamma$ as $\gamma = y_i^a  \gamma_1  \gamma_2$ where $\gamma_2$ is a loop around $S_j$ containing only $y_l$'s, $\gamma_1$ is a product of $x_l$'s and $a\in\{0,1\}$. Note that $\mathsf{deg}(\gamma_2)$ is a multiple of $2N$ and $\gamma_1$ is either a path from $S_i$ to $S_j$ if $a=0$ or a path from $S_{N+1-i}$ to $S_j$ if $a=1$. From the previous discussion and Proposition \ref{prop:poincare} we conclude that $\gamma$ is zero if $\mathsf{dim}_\mathbb{F}\, \mathsf{Ext}^k_A(S_i,S_j) = 0$ or unique modulo $I$ otherwise. This shows that the projection $\Gamma \twoheadrightarrow A$ must be an isomorphism.
\end{proof}

\end{document}